\theoremstyle{definition}
\newtheorem{definition}{Definition}
\theoremstyle{remark}
\theoremstyle{plain}
\newtheorem{theorem}{Theorem}
\newtheorem{lemma}[theorem]{Lemma}
\newtheorem{cor}[theorem]{Corollary}
\theoremstyle{definition}
\newcommand\xqed[1]{%
    \leavevmode\unskip\penalty9999 \hbox{}\nobreak\hfill
    \quad\hbox{#1}}
\newtheorem{exampleqqq}[theorem]{Example}
\newtheorem{remarkqqq}[definition]{Remark}
\newenvironment{remark}{\begin{remarkqqq}}
 {\xqed{$\triangle$}\end{remarkqqq}}
\renewcommand{\d}{\,d}
\providecommand{\RR}{\mathbb{R}}
\providecommand{\ZZ}{\mathbb{Z}}
\providecommand{\NN}{\mathbb{N}}
\providecommand{\CF}{\mathscr{F}}
\providecommand{\CB}{\mathscr{B}}
\providecommand{\CM}{\mathscr{M}}
\providecommand{\CG}{\mathscr{G}}
\providecommand{\opn}{\operatorname}
\def\ii#1{^{(#1)}}
\def\pp#1{\left(#1\right)}
\renewcommand{\P}{\mathsf{P}}
\providecommand{\E}{\mathsf{E}}
\providecommand{\Ex}[1]{\E\pp{#1}}
\providecommand{\var}{\opn{var}}
\providecommand{\Log}{\opn{Log}}
\providecommand{\ett}{\mathsf{1}}
\def\tl#1{\tilde#1}
\def\percol{P_\infty}
\def\g{\mathit{G}}
\def\e{\mathit{H}}
\def\ww{W}
\def\tle{\tilde\e}
\providecommand{\w}{\omega} 
\renewcommand{\L}{\mathscr{L}}
\def\cork{\operatorname{corank}}
\def\Firr{{F_{R}}}
\def\halfopen#1#2{\ensuremath{\left[#1,#2\right)}}
\def\Fn{{\halfopen0n}}
\def\id{\operatorname{id}}
\def\X{X}
\def\Y{Y}
\def\T{T}
\def\L{\mathscr{L}}
\def\A{\mathscr{A}} 
\def\cyl#1#2{{[#1]}_{#2}}
\def\J{\mathrm{J}}
\def\bsmash#1{\vphantom{\bar{#1}}{#1}}
\def\C{\mathscr{C}} 
\def\RC{\mathsf{RC}} 
\def\FK{\mathsf{FK}}
\def\orig{\mathrm{o}}
\def\cyl#1#2{[#1]_{#2}}
\author{Anders Johansson, Anders \"Oberg, and Mark Pollicott}
\address{Anders Johansson, Department of Mathematics, University of G\"avle, 801 76
G\"avle, Sweden.} \email{ajj@hig.se}
\address{Anders \"Oberg, Department of Mathematics, Uppsala University, P.O.\ Box 480,
751 06 Uppsala, Sweden.} \email{anders@math.uu.se}
\address{Mark Pollicott, Mathematics Institute, University of Warwick, Coventry, CV4 7AL,
UK.} \email{mpollic@maths.warwick.ac.uk}
\date{}
\keywords{Dyson model, transfer operator, eigenfunction, long-range Ising model}
\subjclass[2020]{Primary 37D35, 37A60, 82B20, 82B26, 82C27}
\begin{document}
\title{Continuous eigenfunctions of the transfer operator for Dyson models}

\begin{abstract}\noindent
  In this article we address a well known problem at the intersection of ergodic theory and statistical mechanics.
  We prove that there exists a continuous eigenfunction for the transfer
  operator corresponding to pair potentials that satisfy a square summability
  condition on the variations, when the inverse temperature is subcritical. As a corollary we 
  obtain a continuous eigenfunction for the classical Dyson model, with 
  interactions $\J(k)=\beta \, k^{-\alpha}$, $k\ge1$, in the whole subcritical 
  regime $\beta<\beta_c$ for which the parameter $\alpha$ is greater than $3/2$.
\end{abstract}

\maketitle

\section{Introduction}\noindent
Ruelle~\cite{ruelle} and Sinai~\cite{sin} pioneered the study of long-range
models within statistical mechanics in terms of transfer operators and their
eigenfunctions and eigenmeasures, an important theory that later was further developed by
Walters~\cite{walters1}.

It is well known~\cite{walters1} that there exists a continuous and strictly
positive eigenfunction of a transfer operator defined on a symbolic shift space
with a finite number of symbols if the one-point potential $\phi$ has summable
variations. Here we prove, for the particular class of potentials we study, the existence of a continuous
eigenfunction under the weaker condition of square summable variations of the potential $\phi$ and a subcritical scaling parameter $\beta$.

\def\Spin{\mathcal{S}}
Let $\Spin =\{-1,+1\}$ and $V=\NN=\{0,1,\dots\}$ or $V=\ZZ$. Elements $x\in\X$ of $X=\Spin^\NN$ 
are \emph{one-sided} \emph{spin sequences}. 
We will also consider \emph{two-sided} spin sequences $x\in\bar\X=\Spin^\ZZ$ 
depending on the context. 
Let $\T:\X\to\X$ be the left shift $(x_i) \mapsto (x_{i+1})$. 
Consider the positive operator $\L=\L_{\phi}$ on the space of continuous function
$C(\X)$, given by
\[
	\L f(x)= \sum_{y\in \T^{-1}x} e^{\phi(y)}\, f(y),
\]
In this paper, we specify the
\emph{one-point potential} $\phi(x)=\phi(x;\J)\in C(\X)$ by a sequence $\J(k)\ge0$,
$\J(0)=0$, so that $\phi$ takes the form
\begin{equation}\label{dyson1pt}
	\phi(x) = x_0\cdot \sum_{k=1}^\infty \J(k) x_k.
\end{equation}
This type of potential appears naturally in the context of statistical mechanics;
in the important special class of \emph{Dyson potentials} 
we have $\J(k)=\beta \cdot k^{-\alpha}$ for the interaction strength power $\alpha>1$ 
and the inverse temperature $\beta>0$.
We assume the sequence $\J(k)$ is summable, so that in particular, for $n\ge0$,
\[
  r_n := \sum_{k=n+1}^\infty \J(k) \to 0 \quad\text{as $n\to\infty$}.
\]
Note that the $n$th variation $\var_n \phi$, as defined in section \ref{sec:prelim}, equals $2 r_n$. 
Let $\CM(\X)$ denote the set of probability measures on $\X$ and let
\[
  \CM_\phi = \{\nu\in\CM(\X) : \forall\, f\in C(\X) \, \int \L_\phi f\d\nu = \lambda \int f\d\nu, \lambda > 0\}
\]
be the set of normalised \emph{eigenmeasures} for the unique positive eigenvalue
$\lambda=\lambda_\phi$ that equals the spectral radius of $\L$; we also write $\L^*\nu=\lambda \nu$. 
From the Schauder-Tychonoff theorem applied to the mapping 
$\nu\mapsto\L^*\nu/\nu(\X)$ on $\CM(\X)$, we deduce that $\CM_\phi\not=\emptyset$.

We prove the following general result.
\begin{theorem}\label{thm:main}
  For $x\in\X$, define $r(x) = \sum_{n=0}^\infty r_n x_n$. If $\nu\in\CM_\phi$ and
  \begin{equation}\label{eq:coshx}
	\int e^{r(x)} \d\nu(x) < \infty
  \end{equation}
  then there is a strictly positive \emph{continuous} eigenfunction
  $h(x)\in C(\X)$ of $\L=\L_\phi$ such that $\L h = \lambda h$.
\end{theorem}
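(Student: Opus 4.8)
The natural candidate for the eigenfunction is obtained from the normalised iterates $h_n:=\lambda^{-n}\L^{n}\ett$. Since $\L^{*}\nu=\lambda\nu$, every $h_n$ satisfies $\int h_n\d\nu=1$, and $\L h_n=\lambda h_{n+1}$; equivalently the Ces\`aro averages $H_N:=\tfrac1N\sum_{n<N}h_n$ satisfy $\lambda^{-1}\L H_N-H_N=\tfrac1N(h_N-\ett)$. The plan is to prove that the family $\{h_n\}$ (hence $\{H_N\}$) is uniformly bounded below by a positive constant, uniformly bounded above, and asymptotically equicontinuous, $\sup_n\operatorname{osc}_{[w]_m}h_n\to0$ as $m\to\infty$. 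Granting this, Arzel\`a--Ascoli gives a subsequence $H_{N_k}\to h$ uniformly with $h\in C(\X)$; the uniform bound on $h_N$ makes $\tfrac1N(h_N-\ett)\to0$ uniformly, so $\L h=\lambda h$, and $\int h\d\nu=1$ gives $h\not\equiv0$. Strict positivity is then automatic: if $h(x_{0})=0$ then $0=\lambda^{n}h(x_{0})=\L^{n}h(x_{0})=\sum_{y\in\T^{-n}x_{0}}e^{\phi(y)+\dots+\phi(\T^{n-1}y)}h(y)$ forces $h\equiv0$ on $\T^{-n}x_{0}$ for all $n$, hence on a dense set, contradicting continuity and $\int h\d\nu=1$. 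So the whole proof comes down to the three uniform-in-$n$ estimates above.

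To get hold of $h_n$, expand the preimages under $\T^{n}$ and write $\L^{n}\ett$ as a finite-volume Ising partition function,
\[
  \L^{n}\ett(x)=\sum_{w\in\{-1,1\}^{n}}\exp\!\Bigl(\sum_{0\le i<j\le n-1}\J(j-i)\,w_iw_j+\sum_{i=0}^{n-1}w_i\,b_i(x)\Bigr),\qquad b_i(x):=\sum_{k\ge n-i}\J(k)\,x_{k-(n-i)},
\]
so $|b_i(x)|\le r_{\,n-1-i}$ and the tail of $x$ enters only through the boundary field $b_\bullet(x)$. Writing $Z_n$ for the zero-field partition function and $\langle\,\cdot\,\rangle_{n}$ for the corresponding ferromagnetic, spin-flip-symmetric Gibbs expectation, we get $h_n(x)=(\lambda^{-n}Z_n)\,M_n(x)$ with $M_n(x):=\bigl\langle\exp\sum_{i<n}w_ib_i(x)\bigr\rangle_{n}$. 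Since $\langle w_i\rangle_{n}=0$, Jensen gives $\L^{n}\ett(x)\ge Z_n$ and $M_n(x)\ge1$ for every $x$; integrating $\L^{n}\ett=Z_nM_n$ against $\nu$ gives $\lambda^{-n}Z_n=\bigl(\int M_n\d\nu\bigr)^{-1}$. Hence $h_n$ is pinched between $c\,M_n$ and $C\,M_n$ as soon as $\int M_n\d\nu$ is bounded above and away from $0$, and the whole question is transferred to the exponential moments $M_n(x)$ and to the differences $\log M_n(x)-\log M_n(x')$ for $x,x'$ in a common $m$-cylinder.

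This is the crux, and the point at which \eqref{eq:coshx} is used. The naive estimate $M_n(x)\le e^{\sum_{i<n}|b_i(x)|}$ is worthless, because $\sum_{i<n}|b_i(x)|\le\sum_{q<n}r_q$ need not stay bounded — the potential has only square-summable, not summable, variations. Instead the plan is to pass to the random-cluster (FK) representation of $\langle\,\cdot\,\rangle_{n}$, in which $M_n(x)$ linearises and its $x$-dependence is carried by the cluster attached to the "fresh'' sites $i$ near $n-1$ where the field has size $O(r_0)$; summing over the geometry of that cluster, using the positivity and monotonicity of the FK weights, should yield a bound of the shape $M_n(x)\le C\exp\bigl(c\,r(x)\bigr)$, uniformly in $n$, with $C,c$ (and $c\le1$) depending only on $\J$. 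This immediately gives $\int M_n\d\nu\le C\bigl(\int e^{r(x)}\d\nu+1\bigr)<\infty$ by \eqref{eq:coshx}, hence $\lambda^{-n}Z_n$ bounded below, hence the uniform lower bound $h_n\ge c'>0$; and it exhibits $h_n\le M_n\le C\exp(c\,r(x))$ as a $\nu$-integrable envelope. The same FK/connectivity analysis, applied now to the \emph{difference} of boundary fields for $x,x'$ in a common $m$-cylinder, splits $\log M_n(x)-\log M_n(x')$ into a "diagonal'' contribution bounded by $C\sum_{q\ge m}r_q^{2}\to0$ (square-summability) and a "boundary-to-far-site'' contribution that goes to $0$ as $m\to\infty$ for $\nu$-almost every $x$ while staying dominated by the envelope $C\exp(c\,r(x))$; a dominated-convergence / Egorov argument then promotes this to the uniform boundedness and asymptotic equicontinuity required for Arzel\`a--Ascoli. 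The hard part is exactly this FK summation — producing the envelope $\exp(c\,r(x))$ and the matching $\sum_{q\ge m}r_q^{2}$ control of the off-diagonal terms — since it is there that the algebraic form of $\phi$, the monotonicity/positivity of the random-cluster model and the full force of \eqref{eq:coshx} have to be used together; it is also the estimate behind the Dyson-model corollary, where $\alpha>3/2$ (square-summability of $r_n$) and $\beta<\beta_c$ are precisely what make \eqref{eq:coshx} hold.
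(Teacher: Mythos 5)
Your high-level strategy (normalised iterates $h_n=\lambda^{-n}\L^n\ett$ plus Arzel\`a--Ascoli) is genuinely different from the paper's. The paper instead takes $h_n(x)=\mu(\cyl xn)/\nu(\cyl xn)$, the martingale of conditional expectations of the Radon--Nikodym derivative between the (restricted) two-sided equilibrium measure $\mu$ and the one-sided eigenmeasure $\nu$, and proves directly that this sequence converges \emph{uniformly}; Lemma~\ref{lem:RN} then hands you the eigenfunction with no compactness argument at all. The uniform convergence is obtained from the graph cut $\g=\g_+\uplus\e\uplus\g_-$, Lemma~\ref{lem:factorisation}, and the tail estimate $\xi(N>n)\to0$, where $N$ is the minimal $n$ such that the ``irregular set'' $\Firr$ lies inside $\Fn$. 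The hypothesis~\eqref{eq:coshx} enters exactly once, in Lemma~\ref{lem:cosh}, to show $2^{R(\ww)}\in L^1(\xi)$ and hence that $\Firr$ is a.s.\ finite.

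Beyond being a different route, your proposal has a concrete gap that I do not think can be repaired as written. For Arzel\`a--Ascoli you announce the goal of showing $\{h_n\}$ \emph{uniformly bounded above} and \emph{asymptotically equicontinuous}. But the envelope you aim for is $h_n(x)\le M_n(x)\le C\exp(c\,r(x))$. Since $r(x)=\sum_n r_n x_n$ and the $r_n$ are only square-summable, $r(\ett)=\sum_n r_n$ can be $+\infty$; more to the point, $\sup_x e^{r(x)}=\infty$ whenever $\sum r_n=\infty$ (the Dyson regime $3/2<\alpha\le2$). So $C\exp(c\,r(x))$ is a $\nu$-integrable envelope by~\eqref{eq:coshx}, but it is \emph{not} a uniform bound, and Arzel\`a--Ascoli in $C(\X)$ cannot be invoked from an $L^1(\nu)$ envelope. (In fact, a posteriori $\sup_n\sup_x h_n(x)<\infty$ does hold once a bounded continuous eigenfunction exists --- conjugate by $h$ and use $\L_{\log g}^n(1/h)\le1/\min h$ --- but that argument is circular here.) The same problem recurs in your equicontinuity step: the ``boundary-to-far-site'' term is only claimed to vanish for $\nu$-a.e.\ $x$, with an $L^1$ dominating function, and an Egorov/dominated-convergence argument gives control on a set of large $\nu$-measure, not the \emph{sup over $x$} that equicontinuity requires. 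So even granting the unproved FK estimate (which is stated only as ``should yield''), the compactness route does not close. The paper sidesteps precisely this obstacle: by working with the cylinder-ratio $h_n$ and the cut decomposition, the difference $|h_n(x)-h(x)|$ is bounded by $\xi(N>n)\int 2^{R}\d\xi/K_0$, which is \emph{independent of $x$} --- the $x$-dependence is absorbed into the indicator $A_n(x,\ww)$ inside the integral, and the dominated-convergence bound never has to be pointwise in $x$.
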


A measure $\mu\in\CM(\X)$ is a Doeblin measure (\cite{berger2}, a.k.a\ as a
$g$-measure \cite{keane}) if it is a \emph{translation invariant} eigenmeasure of
$\L_{\log g}^*$ for some continuous function $g>0$ with $\L_{\log g} 1 = 1$. The
theory of Doeblin measures (\cite{doeblin, keane, johob, berger, johob2}) is
close to the topic of this paper, since we can construct a Doeblin function $g$,
from a one-point potential $\phi$ and a continuous eigenfunction $h(x)$ of the
transfer operator $\L_\phi$ by
\begin{equation}\label{DF}
	g(x)=\frac{e^{\phi(x)}\, h(x)}{\lambda\, h(Tx)}.
\end{equation}
From~\eqref{DF}, we see that the measure $\mu(x) = h(x)\cdot \nu$ is a translation
invariant eigenmeasure to the transfer operator $\L_{\log g}$, i.e., a Doeblin
measure. We refer to $\mu$ as the \emph{equilibrium} measure of $\phi$. Thus the
existence of a continuous eigenfunction of the transfer operator implies the
\emph{$g$-measure property} of $\mu$. i.e., that $\mu$ can be represented as a Doeblin measure 
(a $g$-measure) for some continuous function $g>0$, i.e., so that it is a fixed point of $\L_{\log g}^*$. In contrast to our result, we
observe the result by Bissacot \emph{et al.}~\cite{vanenter}, where they show
that the $g$-measure property does not hold in the context of the Dyson model, when $1<\alpha<2$ is small enough and for
high values of $\beta$. In view of \eqref{DF} this means that there is no 
continuous eigenfunction. Note, however, that the $g$-measure property may hold even if a
continuous eigenfunction does not exist. 

We may also construct $\nu$ and $\mu$ as long-range \emph{Ising models}. For $V=\ZZ$
and $V=\NN$ define the \emph{potential} (see section~\ref{sec:potentials} below) $\Phi_V$ 
\[
  \Phi_V(x) = \sum_{ij} J_V(ij) x_i x_j, \quad x\in \Spin^V
\]
with $J_V(ij)=\J(|i-j|)$ and where we sum over the set $ij\in V\ii2$ of unordered
pairs of elements $i,j\in V$. We have
\[
  \Phi_V(x) = \sum_{k\in V} \phi(T^k x)
\]
as an equality where we consider both the left and right hand side as potentials. 

The set $\CM_{\phi}$ of eigenmeasures is equal to the
set of Gibbs measures $\CG(\Phi_\NN)$ (one-sided Ising models) consistent with the
potential $\Phi_\NN(x)$. A two-sided Ising model $\mu(\bar x)$,
$\bar x\in{\{-1,+1\}}^\ZZ$, is a translation invariant Gibbs measure in
$\mu\in\CG(\Phi_\ZZ)$. By the Fortuin-Kasteleyn (FK) correspondence, we can couple
$\nu(x)$ with a random cluster distribution $\nu(\g)$ on graphs $\g$ with vertex
set $\NN$ and, similarly, couple $\mu(\bar x)$ to a random cluster distribution
$\mu(\g)$ on graphs with vertex set $\ZZ$. We write $\mu(x)$ for the equilibrium measure, which we can
capture as the marginal distribution of $x=\bar x\vert_\NN$
under $\mu$. For fixed interactions $J_V(ij)$, it is well-known that there
is a critical $\beta_c(J_\NN)\in [0,\infty]$ such that $|\CM_{\beta\phi}|=|\CG(\Phi_\NN)| = 1$ if
$0\le \beta<\beta_c(J_\NN)$ (i.e.\ \emph{uniqueness}) and non-uniqueness if $\beta > \beta_c(J_\NN)$.
We also have a critical $\beta_c(J_\ZZ)$ for uniqueness of $\CG(\beta \Phi_\ZZ)$. 
These critical $\beta_\NN$ and $\beta_\ZZ$ are also critical values for the existence of an infinite
cluster (i.e.,~percolation) in the corresponding random cluster models.

To derive the following theorem we use a result of
Hutchcroft~\cite[Theorem~1.5]{Hutch} about exponentially small tail
probabilities in the cluster size distribution of $\mu(\g)$ in the subcritical
regime. It is a generalisation of a result by Duminil-Copin \emph{et
  al.}~\cite{duminil2} to the long-range setting that we are considering. (See
also the paper by Aoun~\cite{aoun}.)
\begin{theorem}\label{main}
  If $\phi$ has square summable variations, i.e.\ if
  \(\sum_{n=0}^\infty {(\var_n \phi )}^2<\infty\) then there exists a unique continuous
  eigenfunction $h$ of $\L_{\beta\phi}$ for all $\beta<\beta_c(J_\ZZ)$.
\end{theorem}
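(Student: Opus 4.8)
The strategy is to deduce Theorem \ref{main} from Theorem \ref{thm:main}, so the entire task reduces to verifying the moment condition \eqref{eq:coshx}, namely $\int e^{r(x)}\d\nu(x)<\infty$, for some (equivalently every, by uniqueness of the eigenmeasure in the subcritical regime) $\nu\in\CM_{\beta\phi}$. First I would record that square summability of the variations, $\sum_n (\var_n\phi)^2 = 4\sum_n r_n^2<\infty$, means $r=(r_n)_{n\ge0}\in\ell^2(\NN)$, so $r(x)=\sum_n r_n x_n$ is an $L^2(\nu)$-limit of the partial sums and a well-defined measurable function; the point is to upgrade this to exponential integrability. The natural tool is the Fortuin--Kasteleyn representation described in the excerpt: $\nu$ is coupled to a subcritical random cluster measure $\nu(\g)$ on graphs with vertex set $\NN$, and under this coupling the spins $x_i$ inside a cluster are constant while distinct clusters are independent with symmetric signs. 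Hence, conditionally on the cluster partition $\g$, the sum $\sum_{n\ge0} r_n x_n = \sum_{C} \epsilon_C \big(\sum_{n\in C} r_n\big)$ is a sum of independent centred random variables indexed by clusters $C$, with $\epsilon_C=\pm1$ fair coin flips.

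The key step is then a two-layer estimate. Conditionally on $\g$, by Hoeffding's inequality the conditional moment generating function satisfies
\[
  \Ex{e^{r(x)}\mid \g} \le \exp\!\Big(\tfrac12 \sum_{C}\big(\textstyle\sum_{n\in C} r_n\big)^2\Big)
  \le \exp\!\Big(\tfrac12 \sum_{C} |C|\cdot \textstyle\sum_{n\in C} r_n^2\Big),
\]
using Cauchy--Schwarz on each cluster, where the outer sum is over the clusters $C$ of $\g$ containing at least one index with $r_n\ne0$ and $|C|$ denotes the size of the cluster (or, more carefully, the number of relevant indices in it). Bounding $|C|$ by $|C_n|$, the size of the cluster containing a fixed site $n$, this is at most $\exp\big(\tfrac12\sum_{n} r_n^2 |C_n|\big)$. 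So it suffices to control $\Ex{\exp\big(\tfrac12 \sum_n r_n^2 |C_n|\big)}$. Here is where Hutchcroft's theorem \cite[Theorem~1.5]{Hutch} enters: in the subcritical regime $\beta<\beta_c(J_\ZZ)$ (hence also below the one-sided critical value) the cluster-size tail is exponentially small, so there is a constant $c=c(\beta)>0$ with $\Ex{e^{c|C_n|}}\le K<\infty$ uniformly in $n$. Since $\sum_n r_n^2<\infty$, for all large $n$ we have $\tfrac12 r_n^2<c$, so all but finitely many exponential factors are uniformly integrable; applying Hölder's inequality with weights proportional to $r_n^2$ (using convexity of $t\mapsto e^{t}$ and $\sum_n r_n^2<\infty$ after normalising) bounds the full expectation by a finite product/geometric-mean of the quantities $\Ex{e^{c|C_n|}}\le K$, times the contribution of the finitely many bad terms, which is finite because every $|C_n|$ is a.s.\ finite in the subcritical phase.

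The main obstacle is the interchange of the cluster decomposition with the exponential moment: one must make precise that the conditional-independence-across-clusters picture survives when infinitely many sites are involved and that Hutchcroft's bound, stated for the translation-invariant two-sided model $\mu(\g)$ on $\ZZ$, transfers to the one-sided eigenmeasure $\nu(\g)$ on $\NN$ with the stated uniform constant --- this is where the hypothesis $\beta<\beta_c(J_\ZZ)$ (rather than merely $\beta<\beta_c(J_\NN)$) is used, via the comparison between the one- and two-sided models and their random-cluster couplings described above. A secondary technical point is the passage from finite partial sums $r^{(N)}(x)=\sum_{n=0}^{N} r_n x_n$ to $r(x)$: one proves the bound uniformly in $N$ and then invokes Fatou's lemma, so that \eqref{eq:coshx} holds for the limit. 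Once \eqref{eq:coshx} is established, Theorem \ref{thm:main} produces the continuous strictly positive eigenfunction $h$; uniqueness of $h$ follows from uniqueness of the eigenmeasure $\nu$ in the subcritical regime together with the normalisation, since any two continuous eigenfunctions would give mutually absolutely continuous equilibrium measures forcing the densities, hence $h$, to coincide up to a scalar.
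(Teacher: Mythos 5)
Your high-level plan --- reduce to verifying the moment condition \eqref{eq:coshx} of Theorem~\ref{thm:main}, rewrite $\int e^{r(x)}\,d\nu$ via the FK coupling as $\int\prod_{C}\cosh(r(C))\,d\nu(\g)$, feed in Hutchcroft's exponential cluster-size tail for $\beta<\beta_c(J_\ZZ)$, and transfer it from $\mu(\g)$ on $\ZZ$ to $\nu(\g_+)$ on $\NN$ by stochastic domination --- is exactly the route the paper takes. However, the closing estimate has a genuine gap. After applying $\cosh t\le e^{t^2/2}$ and Cauchy--Schwarz you are left with $\Ex{\exp\bigl(\tfrac12\sum_n r_n^2|C_n|\bigr)}$, and the H\"older step with weights $\theta_n\propto r_n^2$ only closes when the Hutchcroft decay rate $c=c(\beta)$ dominates $\tfrac12R$, where $R=\sum_n r_n^2$. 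Indeed, to have $\Ex{e^{(r_n^2/(2\theta_n))|C_n|}}<\infty$ for every $n$ one needs $r_n^2/(2\theta_n)<c$, which together with $\sum_n\theta_n\le1$ forces $R\le 2c$; no choice of weights evades this, and no finite/infinite split helps since the ``finitely many bad'' factors require exponential moments at rates that only get worse under a further H\"older split. Your assertion that these bad terms are harmless ``because every $|C_n|$ is a.s.\ finite'' is a non sequitur: almost sure finiteness does not give finite exponential moments beyond rate $c$, and $c(\beta)\to0$ as $\beta\uparrow\beta_c(J_\ZZ)$ while $R$ stays bounded away from $0$, so the bound genuinely fails to close over the whole subcritical regime.

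The paper avoids this by \emph{not} discarding the $\cosh$ structure. It orders the clusters $C_0,C_1,\dots$ by their minimal element $\iota_k$, expands $\cosh(r(C_k))=1+\sum_{n\ge1}r(C_k)^{2n}/(2n)!$, extracts one factor $r_{\iota_k}^2|C_k|^2$ from $r(C_k)^{2n}$, bounds the remainder by $(R|C_k|)^{n-1}$, and then takes conditional expectation term by term using only \emph{polynomial} moments $\Ex{|C_k|^{n+1}\mid C_0,\dots,C_{k-1}}\le K(n+1)!/c^{n+1}$ --- finite for every $n$ regardless of the size of $c$. The $(2n)!$ in the denominator of the $\cosh$ series beats the $n!/c^n$ growth of these moments for any fixed $c>0$, yielding a finite constant $M$ with $\Ex{\cosh(r(C_k))\mid C_0,\dots,C_{k-1}}\le 1+r_{\iota_k}^2M$, and hence $\Ex{\prod_k\cosh(r(C_k))}\le e^{M\sum_k r_{\iota_k}^2}\le e^{MR}<\infty$. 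In short, your Hoeffding bound throws away the factorial gain that makes the estimate uniform in $c$; working with polynomial (rather than exponential) moments of $|C_k|$ is precisely the repair, and you should also keep the sequential domination argument (condition on $C_0,\dots,C_{k-1}$ and dominate the remaining cluster by a shifted $C_0$) that the paper uses to make the product of conditional expectations rigorous.
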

\begin{remark}
  We conjecture that $\beta_c(J_\ZZ)=\beta_c(J_\NN)$. 
\end{remark}
\begin{remark}
The condition of \emph{square summable variations} was studied in the context of uniqueness and non-uniqueness of Doeblin measures ($g$-measures) $\mu$ 
in \cite{johob}, \cite{johob2}, \cite{berger} and \cite{tak}, i.e., when $\phi=\log g$ and $\sum_{y\in T^{-1}x}g(y)=1$ for all $x$ and $\L_{\log g}^*\mu=\mu$. In \cite{johob} and \cite{johob2}
uniqueness of $\mu$ was proved when $\sum_n (\var_n \log g)^2<\infty$, and Berger \emph{et al}.\ \cite{berger} proved that this is sharp in the sense that for all $\varepsilon>0$ we can have $\sum_n (\var_n \log g)^{2+\varepsilon}<\infty$
and multiple solutions $\mu$ of $\L_{\log g}^*\mu=\mu$. In Gallesco \emph{et al}.\ \cite{tak} they study the ramifications of square summability even further. 
\end{remark}

\begin{remark}
In \cite[Theorem 1]{johob4} we proved that  $\beta_c(J_\ZZ)\le 8\beta_c(J_\NN)$,
which, in the light of the counterexample to uniqueness by Dyson \cite{dyson}, shows that for general potentials $\phi$ we would have examples such that for $\varepsilon>0$, $\sum_n (\var_n \phi)^{1+\varepsilon}<\infty$ and with multiple eigenmeasures $\nu$, i.e., multiple solutions 
$\nu$ of $\L^* \nu=\lambda \nu$. Hence it is clear that the condition of summable variations of general potentials is in the above sense sharp for uniqueness of an eigenmeasure $\nu$ of $\L^*$, and similarly the condition of square summable variations is sharp for Doeblin measures ($g$-measures). In view of the conjugation \eqref{DF},
where we under sufficiently strong conditions have a unique Doeblin measure $\mu$ for the potential $\log g$, and a unique eigenmeaure $\nu$ of $\L^*$ for the potential $\phi$, it seems reasonable to think that for general continuous potentials (not only the potentials we consider here), the condition of square summable variations could be a very important condition for the existence of a continuous eigenfunction of the transfer operator, but we do not specify any conjecture in this direction. 
\end{remark}
\begin{remark}
Walters considered \emph{Bowen's condition} and managed to obtain some regularity for an eigenfunction of the transfer operator in \cite[Theorem 5.1]{walters2}. In the case of the Dyson potentials it reduces to summable variations, and it is not clear whether a continuous eigenfunction follows in general from Bowen's condition. Another interesting condition is the one provided by Berbee in \cite{berbee89}. Berbee proves that there exists a unique Gibbs measure for both one-sided and two-sided long-range models whenever
$$\sum_{n=1}^\infty e^{-r_1-\cdots -r_n}=\infty,$$
where $r_n=\var_n \phi$, for some potential $\phi$, suggesting the possible existence of a continuous eigenfunction, but this is unknown. In the case of the Dyson model, Berbee's condition reduces to the assumption $\alpha \geq 2$ (and high temperatures in the case of $\alpha=2$). Note that $\alpha>2$ is the region of summable variations in which
the Dyson model does not have a phase transition at any temperature. 
\end{remark}

\begin{remark}
The existence of a continuous eigenfunction is also related to the rigidity of coboundaries; see especially the work of Quas \cite{quas}. 
\end{remark}

As a corollary, we obtain the existence of a continuous eigenfunction in the
important special class of Dyson potentials where $\J(k)=k^{-\alpha}$ in the
subcritical regime when $\alpha>3/2$. In particular, the potential does not satisfy
the stronger condition of summable variations.
\begin{cor}\label{cor}
  For the Dyson model, where $\J(k)=\J^\alpha(k)= k^{-\alpha}$, $\alpha >1$, we have a continuous eigenfunction $h$ of $\L_{\beta\phi}$ whenever $\alpha>3/2$ and $\beta<\beta_c(J_\ZZ^\alpha)$.
\end{cor}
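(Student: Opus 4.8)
\emph{Proof strategy for Corollary~\ref{cor}.}
The plan is to obtain the corollary as an immediate consequence of Theorem~\ref{main}: the only thing to check is that the Dyson one-point potential $\phi=\phi(\cdot;\J^\alpha)$ with $\J^\alpha(k)=k^{-\alpha}$ has square summable variations exactly when $\alpha>3/2$. Recall from the discussion preceding Theorem~\ref{thm:main} that $\var_n\phi=2r_n$ with $r_n=\sum_{k=n+1}^{\infty}k^{-\alpha}$, so everything reduces to a tail estimate for $r_n$.

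First I would estimate $r_n$ by comparing the sum with an integral. Since $x\mapsto x^{-\alpha}$ is positive and decreasing for $\alpha>1$, one has $\int_{n+1}^{\infty}x^{-\alpha}\,dx\le r_n\le\int_{n}^{\infty}x^{-\alpha}\,dx$, hence $r_n=\tfrac{1}{\alpha-1}\,n^{1-\alpha}\bigl(1+o(1)\bigr)$ and in particular $r_n\asymp n^{1-\alpha}$. Consequently
\[
  \sum_{n=0}^{\infty}(\var_n\phi)^2=4\sum_{n=0}^{\infty}r_n^2\asymp\sum_{n\ge1}n^{2(1-\alpha)},
\]
and this last series converges if and only if $2(1-\alpha)<-1$, i.e.\ if and only if $\alpha>3/2$. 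So for every $\alpha>3/2$ the hypothesis $\sum_n(\var_n\phi)^2<\infty$ of Theorem~\ref{main} is met.

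It then only remains to invoke Theorem~\ref{main} with the interaction sequence $\J^\alpha$ (equivalently $J_\ZZ^\alpha(ij)=|i-j|^{-\alpha}$): this produces a unique continuous eigenfunction $h$ of $\L_{\beta\phi}$ for all $\beta<\beta_c(J_\ZZ^\alpha)$, which is exactly the assertion. Finally, to justify the sentence preceding the statement, I would note that $\sum_n\var_n\phi=2\sum_n r_n\asymp\sum_{n\ge1}n^{1-\alpha}$ diverges for $\alpha\le2$, so in the window $3/2<\alpha\le2$ the potential does \emph{not} have summable variations and the classical summable-variations criterion of Walters~\cite{walters1} does not apply; that is precisely the range in which the corollary says something new. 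I do not expect any genuine obstacle here: all the difficulty is already contained in Theorem~\ref{main} (through Hutchcroft's exponential cluster-size tail bound and Theorem~\ref{thm:main}), and the corollary amounts to this one elementary estimate on $r_n$ together with the dichotomy at $\alpha=3/2$.
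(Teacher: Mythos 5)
Your proposal is correct and matches the paper's intent: the corollary is stated as a direct consequence of Theorem~\ref{main}, and the only content is the elementary tail estimate $r_n\asymp n^{1-\alpha}$ showing that $\sum_n(\var_n\phi)^2<\infty$ precisely for $\alpha>3/2$ (and that summable variations fails for $\alpha\le2$), which is exactly what you supply.
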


\begin{remark}
 In the recent paper by van Enter, Fern\'andez, Makhmudov and Verbitskiy~\cite{verb-van-enter2}, inspired by a previous version of the present paper, the authors 
 work in a more general setting where the random cluster representation does not exist. In Corollary \ref{cor}, we obtain a continuous eigenfunction when 
 $\alpha>3/2$ for the full uniqueness region, with respect to the critical inverse temperature, whereas van Enter \emph{et al}.\ \cite{verb-van-enter2} assume in 
 addition the Dobrushin uniqueness condition. 
\end{remark} 

\begin{remark}
We expect that the square summability condition in Theorem~\ref{main} is sharp.
Applied to the Dyson model in Corollary~\ref{cor}, this means that $\alpha>3/2$ would
be sharp for the existence of a continuous eigenfunction. Note the recent
results by Endo, van Enter and Le Ny~\cite{vanny} in this context, as well as
the earlier short version~\cite{vanny0} by the same authors, where $3/2$ first
appears in a related context.
\end{remark} 

\section{The proofs of Theorem~\ref{thm:main} and Theorem~\ref{main}}

\subsection{Preliminaries}\label{sec:prelim}

Let $\A$ be a finite set and $V$ a countable set. The relation $F \Subset V$ states
that $F$ is a finite subset of $V$. We write $\bar F = V\setminus F$ for the complement
of subsets of $V$. A \emph{configuration} is an element $x={(x_i)}_{i\in V}$ of the
product space $\X=\A^V$. If $V=\ZZ$ or $V=\NN$, we let $\T$ denote the left
shift on the symbolic space $\X$, i.e.\ ${(\T x)}_i = x_{i+1}$ with destruction
of $x_0$ if $V=\NN$. We give the space $\X$ the usual product topology and the
associated Borel sigma-algebra $\CF$. For $G \subset V$ and $x\in\X$, we write $x_G\in\A^G$
for the restriction $x\vert_G$ of $x$ to $G$ and $\CF_G$ for the sigma-algebra
generated by $x_G$ and $\mathrm{m}\CF_G$ for the 
set of $\CF_G$-measurable functions. We denote by $\cyl xG$ the cylinder set
$\cyl xG=\{y \mid y_G=x_G\}$.

For a function $f:\X\to\RR$ the \emph{variation} at $\Lambda\subset V$ and $x\in\X$ is
$$\var_\Lambda f(x) = \sup_{y,z\in \cyl x \Lambda} |f(y)-f(z)|$$ and
$\var_\Lambda f = \sup_x \var_\Lambda f(x)$. A function $f$ is \emph{local} if it is
$\CF_\Lambda$-measurable ($\var_\Lambda f = 0$) at some $\Lambda\Subset V$. It is \emph{continuous} at
$x$ if $\lim_{\Lambda_n \uparrow V} \var_{\Lambda_n} f(x) = 0$, where $\Lambda_n \uparrow V$ denotes an
increasing sequence of finite sets whose union is $V$. We denote by $C(\X)$ the
space of continuous functions. In case we consider the integer interval $\Lambda=\Fn$,
we replace subscript by $\Fn$ with the subscript $n$, thus
\[ \var_n f = \var_\Fn f,\quad\text{and}\quad \cyl xn = \cyl x{\Fn}, \ldots \ \text{etc}.\]

Let $\CM(\X)$ denote the space of probability distributions on $\X$. Elements
$\alpha\in\CM(\X)$ are often written as $\alpha(x)$ in order to make it clear that $\alpha$ is
the distribution $\P(x)$ of the random configuration $x\in\X$. If $y=f(x)$ then
$\alpha(y)$ refers to $f_*\alpha=\alpha\circ f^{-1}$. Sometimes, we introduce an underlying
probability space $(\Omega,\CB,\P)$ with expectation operator $\E$, where it is clear
that $\alpha$ is the distribution of $x$. In that case, we write $\E(f(x)) = \int f\d\alpha$
and $\P(x\in A)=\int \ett_A(x) \d\alpha(x)$. We write $\mu(x)\prec\nu(x')$ to mean
\emph{stochastic domination} between elements in $\CM(\X)$, meaning that we can
couple $\mu(x)$ and $\nu(x')$ so that $\P(x \le x')=1$ with respect to the partial
order $\le$ on $\X$ induced by the order on $\A\subset\ZZ$.

A \emph{Bernoulli measure} $\eta(x;p)\in\CM(\X)$ has as parameter an assignment
$p={(p_i)}_{i\in V} \in \CM{(\A)}^V$ and $\eta(x;p)$ is the product measure
$\bigotimes_{i\in V} p_i(x_i)$. We use $\upsilon(x)$ to denote the \emph{uniform
  measure}, i.e.\ $\upsilon(x) = \eta(x;p)$ where $p_i = \upsilon(x_i)$ is the uniform
distribution on $\A$.

\subsubsection{Potentials}\label{sec:potentials}
The Hamming distance on $\X=\A^V$ is the cardinality of
$$\Delta(x,y):=\{i: x(i)\not=y(i)\} $$
and the Hamming graph (digraph) is the set of pairs
$\{(x,x'): |\Delta(x,x')|=1\}$. A \emph{potential} is a real valued function
$\Delta\Phi(x,x')$ on the Hamming graph such that
$$\Delta\Phi(x_1,x_2)+\Delta\Phi(x_2,x_3) +\cdots+ \Delta\Phi(x_k,x_1)=0$$ for
every cycle $\{(x_1,x_2), \dots, (x_k,x_1)\}$. It is clear that potentials on
$\X$ make up a real linear space. Formally, we write potentials as functions
$\Phi(x)$ on $\X$, since any real-valued function $\phi(x)$ defines the
potential $\Delta\phi(x,y)=\phi(x)-\phi(y)$ and we can always represent a
potential \emph{locally} at $\Lambda\Subset V$ with a real valued function
$\Phi_{\Lambda}$ so that $\Delta\Phi(x,y)=\Phi_\Lambda(x)-\Phi_\Lambda(y)$ for
all pairs $(x,y)$ where $\Delta(x,y)\subset\Lambda$. We usually obtain a
potential $\Phi$ as a \emph{potential limit} $\lim \Phi_{\Lambda_n}$ from a
system of local functions $\Phi_\Lambda \in \mathrm{m}\CF_\Lambda$, where the
limits of differences
\[
    \Delta\Phi(x,y) := \lim_{\Lambda_n\uparrow V} \Phi_{\Lambda_n}(x)-\Phi_{\Lambda_n}(y)
\]
are finite and well defined for all pairs $(x,y)$ with $\Delta(x,y)\Subset V$. A
potential $\Phi$ is \emph{continuous} at $x$ if the difference
$\Delta\Phi(x_{\bsmash{\Lambda}}x_{\bar\Lambda},y_{\bsmash{\Lambda}}x_{\bar\Lambda})$
is a continuous function of $x_{\bar\Lambda}$ for fixed $x_\Lambda$ and $y_\Lambda$ in $\A^\Lambda$.

We say that a probability measure $\alpha(x)\in\CM(\X)$ is \emph{fully
  specified} if the system $\{\log \alpha(\cyl x{\Lambda_n})\}$ defines a
potential $\Log\alpha$ so that, for all pairs $x,y$ with $|\Delta(x,y)|=1$, the limit
\[
  \lim_{\Lambda_n\uparrow V}
  \log \alpha(\cyl x{\Lambda_n}) - \log \alpha(\cyl y{\Lambda_n})
\]
exists as a real number. For instance, a Bernoulli measure $\eta(p) = \eta(x;p)$
is fully specified with $\Log \eta(x)$ given by the potential limit
$\sum_{i\in V} \log p_i(x_i)$. We say that $\alpha$ is \emph{consistent} with
potential $\Phi$ if $\Phi=\Log\alpha$ and we write $\CG(\Phi)$ to denote the set
of measures consistent with $\Phi$.

It is well known
that the set $\CG(\Phi)$ of probability measures consistent with potential $\Phi$ is
non-empty whenever $\Phi$ is continuous and the elements of $\CG(\Phi)$ are then said
to be \emph{Gibbsian}. We have \emph{uniqueness} of measures consistent with $\Phi$
if $\CG(\Phi)$ contains only one element.

Given a fully specified measure $\alpha\in\CM(\X)$ and a potential $\Phi$, we write
$e^\Phi\ltimes \alpha$ for the fully specified measures in $\CG(\Phi+\operatorname{Log}\alpha)$.
If we have a unique element in
$e^\Phi\ltimes\alpha$ we write $\mu=e^\Phi\ltimes\alpha$.
If we can represent the
potential $\Phi$ as a function such
that $e^{\Phi}\in L^1(\alpha)$ then $e^{\Phi}\ltimes\alpha$ is absolutely continuous with respect to $\alpha$ and
\begin{equation} \label{ccdef}
    e^{\Phi} \ltimes \alpha = \frac{e^{\Phi} \cdot \alpha}{\int e^{\Phi} \d\alpha}.
\end{equation}
Note that
\begin{equation}\label{eq:modassoc}
  e^\Psi\ltimes (e^\Phi \ltimes \alpha) = e^{\Psi+\Phi}\ltimes\alpha.
\end{equation}
Furthermore, given a product of fully specified measures
$\alpha(x)\otimes \beta(y)$ and potentials $\Phi(x)$ and $\Psi(y)$ on $\X$ and
$\Y$, respectively, we obtain that
\begin{equation}\label{eq:moddistrib}
  e^{\Phi+\Psi}\ltimes (\alpha\otimes\beta) = (e^\Phi\ltimes \alpha)\otimes(e^{\Psi}\ltimes \beta).
\end{equation}

\subsubsection{Graphs}
Let $V\ii2 = V^2/\sim$ denote the set of \emph{unordered pairs}, i.e.\ the
family of equivalence classes for the relation $(i,j)\sim(j,i)$ on $V^2$. For a
map $\varphi:V\to V'$ we write $\varphi\ii2$ for the induced map
$V\ii2\to {V'}\ii2$.
We represent a \emph{graph} (an undirected graph) $G$ on
vertex set $V=V(G)$ as a map $G:E\to V\ii2$ that associates edges in $E=E(G)$ to
pairs of vertices in $V\ii2$.
The graph is \emph{simple} if its representative
map is injective and we call $|G^{-1}(G(e))|$ the multiplicity of the edge
$e\in E(G)$. A graph homomorphism $\varphi:G\to H$ is a pair of maps
$\varphi_E: E(G)\to E(H)$ and $\varphi_V: V(G)\to V(H)$ with commutation rules
$\varphi_V\ii2 \circ G = H \circ \varphi_E$. A map $\varphi:V\to V'$ induces a
\emph{vertex-map} homomorphism $\varphi:G\to G'=\varphi\ii2 \circ G$ given by
the pair $(\id_E,\varphi\ii2)$.

The \emph{complete graph} on $V$, $K(V)$, is the inclusion of the non-loops in
$V\ii2$. Given a bipartition $V=V_-\uplus V_+$ of $V$, the \emph{complete bipartite}
graph $K(V_-,V_+)$ is the inclusion of $V_- \times V_+$ in $V\ii2$. A \emph{path} of
length $n$ in $G$ is an injective graph homomorphism $P_n \to G$ of the graph
$P_n: \{(i,i+1):i\in\Fn\}\hookrightarrow{[0,n]\ii2}$.

A \emph{spanning} subgraph $H$ of $G$ is a restriction of $G$ to a subset
$E(H) \subset E(G)$. Denote by $\Gamma(G)$ the space of spanning subgraphs of $G$ and let
$\Gamma(V)=\Gamma(K(V))$ and $\Gamma(V_-,V_+)=\Gamma(K(V_-,V_+))$. We can represent an element
$\g\in\Gamma(G)$ as a configuration $\g=(\g_{e})\in{\{0,1\}}^{E(G)}$ or, equivalently, as
a subset $\g \subset V\ii2$. Write $G[F] = G\vert_{G^{-1}(F\ii2)}$ for the subgraph
\emph{induced} on vertex set $F \subset V$. All (random) graphs $\g\in\Gamma(V)$ we consider
will (almost surely) have finite degrees, i.e.\
\(\deg(F,\g) := \sum_{i\in F} \sum_{j\in V} \g_{ij} < \infty\), for all $F \Subset V$.

Consider an equivalence relation ${\sim}$ on $V$, where $\pi_{\sim}: i\to i\bmod{\sim}$
denotes the projection onto the equivalence classes. The \emph{contraction}
$G\to G \bmod{\sim}$ of $G$ along ${\sim}$ is the graph homomorphism induced by the
vertex-map $\pi_{\sim}$. Then $G\bmod{\sim}$ has the partition $V/\sim$ of $V$ into
equivalence classes as the vertex set and the same set of edges. Thus the multiplicity of edges may increase. Since $E(G)=E(G\bmod{\sim})$ we have
$\Gamma(G)\cong\Gamma(G\bmod{\sim})$ as configuration spaces. If $F\subset V$ then we write $G^F$ for the
contraction obtained from the equivalence relation ``$x,y \in F$ or $x=y$'', i.e.\
by contracting all vertices in $F$. 

The equivalence relation $i\sim_\g j$ means that there is a path in $\g$ with
endpoints $i,j$. We refer to the equivalence classes $\C(\g):=V(\g)/{\sim_\g}$ as
\emph{clusters} of $\g$. Let $\w(\g)=|\C(\g)|$ be the \emph{number of clusters}.
For infinite graphs and $\Lambda_n\uparrow V$, we define $\w(\g)$ as the potential given by
the (``free boundary'') potential limit of $\w(\g[\Lambda_n])$. We defined the (wired
boundary) potential $\w^w(\g)$ from the limit of $\w(\g^{\bar\Lambda_n})$, where
$\g^{\bar\Lambda_n}$ is the graph $\g$ where all vertices outside $\Lambda_n$ count as one.
The event of \emph{percolation} $\g\in\percol$ means that $\C(\g)$ contains a
cluster of infinite size. The potential $\w(\g)$ is continuous at $\g\in\Gamma(V)$,
precisely when $\g$ contains at most one cluster of infinite size.

We refer to the \emph{rank} of a graph $\g\in\Gamma(V)$ as
$\opn{rank}\g := |V(\g)|-\w(\g)$ and the corank is
$\cork\g = |E(\g)| - \opn{rank} \g$. Then $\cork\g$ is the maximum number of
edges that one may remove from $\g$ without increasing the number of components.
For infinite graphs, we use the induced graphs $\g[\Lambda_n]$ to define the rank and
corank as potentials on $\Gamma(V)$ as potential limits.

\subsection{The eigenfunction as a Radon-Nikodym derivative}

A continuous eigenfunction means that there is a continuous Radon-Nikodym
derivative between the two-sided equilibrium measure (a translation invariant
Gibbs measure) and the one-sided Gibbs measure, that is, the marginal of the two-sided measure 
obtained by taking the restriction to $\NN$. Consider the transfer operator
$\L=\L_\phi$ from Theorem~\ref{thm:main}. Let $\nu\in\CM_\phi$ and let $\mu\in\CM(\X)$ be any
\emph{translation invariant} measure such that $\mu\vert_{\CF_n}\ll\nu\vert_{\CF_n}$, for all
$n\ge0$. For $x\in\X$ define the likelihood ratios $h_n(x)$, $n\ge0$, by
\begin{equation}\label{eq:defh}
  h_n(x)=\frac{\mu\left(\cyl xn\right)}{\nu\left(\cyl xn\right)},
\end{equation}
where $\cyl xn=\cyl x\Fn$. The limit of $h_n$ in~\eqref{eq:defh} is well-defined
$\nu$-almost everywhere by the martingale convergence theorem. If it exists in
$L^1(\nu)$ then $\mu \ll \nu$ and the limit $h$ is equal to the Radon-Nikodym derivative
$h=d\mu/d\nu$. As shown in the lemma below, we can then deduce the existence of an
eigenfunction $h$ in $L^1(\nu)$.

The following lemma states that if the sequence $h_n$ converges uniformly, then
the continuous limit function indeed is a strictly positive eigenfunction. The
continuity is of course an elementary consequence of uniform convergence by
Cauchy's theorem, and boundedness follows from the continuity of $h$ on the
compact set $X$. Uniform convergence also implies convergence in $L^1(\nu)$ and we
note that $\int h\; d\nu = 1$.
\begin{lemma}[Radon-Nikodym interpretation]\label{lem:RN}
  If $h_n(x)\to h(x)$ uniformly as $n\to\infty$ then $h$ is a continuous eigenfunction of
  $\L$ such that $\inf h(x) >0$.
\end{lemma}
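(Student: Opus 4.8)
The plan is to exploit the two defining properties of $h_n$: that it is a ratio of cylinder masses of a $\T$-invariant measure $\mu$ against an eigenmeasure $\nu$, and that $\nu$ satisfies $\L^*\nu=\lambda\nu$. First I would record the elementary consequences of uniform convergence. Since each $h_n$ is continuous (indeed $\CF_n$-measurable, hence locally constant) and $h_n\to h$ uniformly on the compact space $\X$, the limit $h$ is continuous by the uniform limit theorem, and therefore bounded. Uniform convergence also gives $\int h\,d\nu=\lim\int h_n\,d\nu=1$, because $\int h_n\,d\nu=\sum_{\cyl xn}\mu(\cyl xn)=1$ for every $n$ by the definition in~\eqref{eq:defh}; in particular $h\ge0$ and $h$ is not identically zero.

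Next I would establish the eigenfunction identity $\L h=\lambda h$ by passing to the limit in an approximate identity for $h_n$. The key computation is to compare $\L h_{n+1}$ with $h_n$. Using the defining sum $\L f(x)=\sum_{y\in\T^{-1}x}e^{\phi(y)}f(y)$ and writing $y=ax$ for the two preimages $a\in\{-1,+1\}$, one has
\[
  \L h_{n+1}(x)=\sum_{a}e^{\phi(ax)}\,\frac{\mu(\cyl{ax}{n+1})}{\nu(\cyl{ax}{n+1})}.
\]
The point is that $\cyl{ax}{n+1}$ is the cylinder on coordinates $0,\dots,n$ of the word $ax$, i.e.\ on the symbol $a$ followed by $x_0,\dots,x_{n-1}$; summing over $a$ reconstitutes $\T^{-1}(\cyl xn)$ at the level of the numerator, where $\sum_a\mu(\cyl{ax}{n+1})=\mu(\cyl xn)$ by $\T$-invariance of $\mu$. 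For the denominator, the eigenmeasure relation applied to the indicator $\ett_{\cyl xn}$ gives $\lambda\,\nu(\cyl xn)=\int\L\ett_{\cyl xn}\,d\nu=\sum_a e^{\phi(a\cdot)}$ integrated appropriately; more precisely $\nu(\cyl{ax}{n+1})$ and $e^{\phi(ax)}\nu(\cyl xn)$ agree up to the variation of $\phi$ on the relevant cylinder. Thus one obtains an estimate of the form
\[
  \bigl|\L h_{n+1}(x)-\lambda\,h_n(x)\bigr|\le C\,\varepsilon_n\,\|h_{n+1}\|_\infty,
\]
where $\varepsilon_n\to0$ encodes $\var_n\phi=2r_n\to0$ together with the quality of the eigenmeasure identity on $\CF_n$-cylinders. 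Letting $n\to\infty$, using $\|h_{n+1}\|_\infty\to\|h\|_\infty<\infty$, $h_n\to h$ uniformly, and continuity of $\L$ on $C(\X)$, yields $\L h=\lambda h$.

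Finally I would upgrade $h\ge0$, $h\not\equiv0$ to $\inf h>0$. Since $\L$ is a positive operator and $\lambda$ is its spectral radius, iterating $\L^N h=\lambda^N h$ and using that $\T$ is the full shift (so $\T^{-N}x$ covers every length-$N$ word) spreads positivity: if $h(y_0)>0$ for some $y_0$, then for any $x$ there is a preimage $y\in\T^{-N}x$ with $y$ agreeing with $y_0$ on the first $N$ coordinates, and $\lambda^N h(x)=\L^N h(x)\ge e^{\phi_N(y)}h(y)$ where $\phi_N$ denotes the $N$-th Birkhoff-type sum; since $h$ is continuous and positive somewhere, a compactness/continuity argument then gives a uniform lower bound. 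Concretely, $\inf h\ge\lambda^{-N}e^{-N\|\phi\|_\infty}\inf_{z\in U}h(z)$ for a suitable cylinder neighbourhood $U$ on which $h$ is bounded below, which is strictly positive. This gives $\inf h>0$, completing the proof.

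I expect the main obstacle to be the bookkeeping in the second step: carefully matching the cylinder $\cyl{ax}{n+1}$ with the shifted cylinder $\cyl xn$, tracking which coordinates $\phi$ depends on, and quantifying the error $\varepsilon_n$ in terms of $\var_n\phi=2r_n$ and the eigenmeasure property, so that everything vanishes in the limit. The positivity step is comparatively routine once the full-shift structure and positivity of $\L$ are used.
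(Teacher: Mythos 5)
Your proof is correct, but it takes a genuinely different route from the paper's. For the eigenfunction identity, the paper uses a short duality computation: writing $\int g\,h\,d\nu=\int g\,d\mu$, then $\T$-invariance of $\mu$, then the eigenmeasure relation $\L^*\nu=\lambda\nu$, and finally the algebraic identity $\L\bigl((g\circ\T)\cdot h\bigr)=g\cdot\L h$, to conclude $\int g\,h\,d\nu=\int g\cdot\tfrac1\lambda\L h\,d\nu$ for all $g\in C(\X)$, hence $\L h=\lambda h$ in $L^1(\nu)$. This argument is purely measure-theoretic and never invokes $\var_n\phi\to0$ explicitly (it only yields an $L^1(\nu)$ identity, which one upgrades to a pointwise one because both sides are continuous and $\nu$ has full support). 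You instead derive a pointwise approximate identity $\L h_{n+1}\approx\lambda h_n$ directly from the cylinder definitions, with an error controlled by $\var_n\phi=2r_n\to0$, and pass to the limit using uniform convergence and continuity of $\L$ on $C(\X)$; this is more computational but gives the pointwise eigenfunction equation directly without invoking full support of $\nu$. Your intermediate remark that ``$\nu(\cyl{ax}{n+1})$ and $e^{\phi(ax)}\nu(\cyl xn)$ agree up to the variation of $\phi$'' should carry a factor $\lambda^{-1}$ --- the eigenmeasure identity gives $\lambda\nu(\cyl{ax}{n+1})=\int_{\cyl xn}e^{\phi(az)}\,d\nu(z)$ --- but your final estimate $|\L h_{n+1}-\lambda h_n|\lesssim\varepsilon_n\|h_n\|_\infty$ has it in the right place, so this is only a slip in the prose. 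For strict positivity, the paper observes that $h(x)=0$ forces $h(ax)=0$ for all $a$, so the zero set of $h$ is either empty or dense, which is incompatible with continuity unless $h\equiv0$ (excluded by $\int h\,d\nu=1$); your argument iterates $\L^Nh=\lambda^Nh$ and propagates positivity from a cylinder neighbourhood where $h$ is bounded below. Both are valid; the paper's version is a little shorter.
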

\begin{proof}[Proof of Lemma~\ref{lem:RN}]
  That the Radon-Nikodym derivative $h=d\mu/d\nu$, if it exists, is necessarily an
  eigenfunction of the transfer operator $\L$ follows from
  \begin{eqnarray*}
    \int g \cdot h\d\nu &=& \int (g\circ \T) \cdot  h \d\nu \qquad(\mu=\mu\circ\T^{-1}) \\
               &=& \int \frac 1\lambda \L(g\circ\T\cdot h) \d\nu \qquad (\text{$\nu$ eigenmeasure})\\
               &=& \int g \cdot \left(\frac 1\lambda \L h \right) \d\nu,
  \end{eqnarray*}
  where the last equality follows from the definition of $\L$. This holds for
  all $g\in C(\X)$ if and only if $\L h = \lambda h$, $\lambda>0$, as elements of $L^1(\nu)$.

  We deduce that $\inf h>0$ by the following argument. For an $x$ such that
  $h(x)=0$ we have $\L h(x) = \sum_{a\in A} e^{\phi(ax)} h(ax) = 0$ and hence $h(ax)=0$
  for all $a\in\A$. Thus the set of zeros of $h$ is either empty or a dense subset
  of $\X$, implying that $\inf h>0$ by continuity and compactness.
\end{proof}

\subsection{The FK-Ising model}

We parameterise the \emph{Bernoulli graph model} $\eta(\g;p)\in\CM(\Gamma(V))$ by
edge probabilities $p:V\ii2\to[0,1]$, where $p(ij)=\P(\g_{ij}=1)$. Given $\J(k)$,
$k\in\NN$, as in Theorem~\ref{thm:main}, let $J_V(ij) := \J(|i-j|)$, $ij\in V\ii2$,
where $V\subset \ZZ$. We write $p=1-e^{-J_V}$ if
\begin{equation}\label{eq:pdef}
  p(ij) = 1 - e^{-J_V(ij)},\quad ij\in V\ii2.
\end{equation}

We obtain the \emph{FK-Ising model}
$\FK(x,\g; J_V) \in \CM(\X \times \Gamma(V))$ as a joint distribution of spin
configuration $x\in\X={\{-1,+1\}}^V$ and a random graph $\g\in\Gamma(V)$. The
pair $(x,\g)$ is compatible in the sense that no path in $\g$ connects vertices
of opposing spins. If $\alpha(x,\g)=\FK(x,\g;J_V)$ then $\alpha(x,\g)$ it is not
fully specified by a potential in the sense defined above. But, the marginal
$\alpha(x)$ of the spin sequence $x\in\X$ is an \emph{Ising model}
$\alpha(x)\in\CG(\Phi(x;J_V))$ with potential
\begin{equation}\label{eq:dysonfull}
  \Phi(x)=\Phi(x;J_V) = \sum_{ij\in V\ii2} J_V(ij) x_i x_j.
\end{equation}
As the marginal of $\g$, we obtain the \emph{random-cluster model} (FK-model)
\[ \alpha(\g) = \RC_{2}(\g;p=1-e^{-J_V})\] 
defined in \eqref{eq:rcdef} below. The conditional distribution
of $x$ given $\g$ is that of $x_i=x(C_\g(i))$, where
$(x(C):C\in \C(\g))\in{\{-1,+1\}}^{\C(\g)}$ has the uniform Bernoulli distribution
$\upsilon$.

With $r(x)=\sum_n r_n x_n$ as in condition~\eqref{eq:coshx} in
Theorem~\ref{thm:main}, we see that, conditioned on $\g\in\Gamma(\NN)$, the
distribution of $r(x)$ is that of a Rademacher series $r(x)=\sum_{C} x(C)\cdot r(C)$,
where $x(C)\in\{-1,+1\}$ are uniformly and independently sampled and $r(C)=\sum_{n\in C} r_n$. Thus
\(\Ex{e^{r(x)}\mid \g}=\prod_C \cosh(r(C))\) and we obtain
\begin{equation}\label{eq:coshxcosh}
	\int e^{r(x)} \d\nu(x) = \int \prod_{C\in\C(\g)} \cosh\left(r(C)\right) \d\nu(\g).
\end{equation}
For our purposes, the right hand side is more useful.

The following lemma expresses the cylinder probabilities of the Ising model in
terms of the random cluster model $\alpha(\g)$. For a graph $\g\in\Gamma(V)$, partial spin
$x\in {\{+1,-1\}}^S$, $S\subset V$, we say that $\g$ is \emph{compatible with $x$ at
  $F\Subset S$} if no path in $\g$ has endpoints $i,j\in F$ such that $x_i\not=x_j$.
Write $B_F(x,\g)\in\{0,1\}$ to indicate compatibility between $x$ and $\g$ at $F$
and $B_n(x,\g)$ if $F=\Fn$.
\begin{lemma}\label{lem:probcyl}
  For a FK-Ising distribution $\alpha(x,\g) = \FK((x,\g); J_V)$ and a fixed finite
  subset $F\Subset V$, we can express the probability of a cylinder $\cyl xF$ as
  \begin{equation}\label{eq:probcyl1}
    \alpha\left(\cyl xF\right) =  \int 2^{-\w_F(\g)} \, B_F(x,\g) \d\alpha(\g),
  \end{equation}
  where $\w_F(\g) := |\{ C\in\C(\g) : C\cap F \not=\emptyset \}|$ is the number of clusters
  in $\g$ that intersect $F$.
\end{lemma}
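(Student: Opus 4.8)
The plan is to establish formula~\eqref{eq:probcyl1} by conditioning on the random graph $\g$ and using the explicit description of the FK--Ising coupling given just above the lemma. Recall that under $\alpha(x,\g) = \FK((x,\g);J_V)$, the conditional law of $x$ given $\g$ is obtained by assigning to each cluster $C\in\C(\g)$ an independent uniform sign $x(C)\in\{-1,+1\}$ and setting $x_i = x(C_\g(i))$ for $i$ in cluster $C$. Thus, for a fixed finite $F\Subset V$, the event $\{x_F = \text{(prescribed partial spin)}\}$ decomposes according to how $F$ meets the clusters of $\g$: it is nonempty only if the prescribed spins are constant on each trace $C\cap F$, which is exactly the compatibility condition recorded by $B_F(x,\g)$, and in that case it is the event that the $\w_F(\g)$ independent cluster-signs (those for clusters meeting $F$) take the prescribed common values, an event of probability $2^{-\w_F(\g)}$.

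Carrying this out, I would write
\[
  \alpha\!\left(\cyl xF\right) = \int \Pr{x'_F = x_F \mid \g}\,\d\alpha(\g),
\]
and then evaluate the conditional probability. Conditionally on $\g$, the random variable $\ett\{x'_F = x_F\}$ equals $B_F(x,\g)\cdot\prod_{C:\,C\cap F\neq\emptyset}\ett\{x'(C)=x_i \text{ for } i\in C\cap F\}$; since the $x'(C)$ are independent uniform signs, the conditional expectation of this product over the $\w_F(\g)$ relevant clusters is $B_F(x,\g)\,2^{-\w_F(\g)}$. Substituting back gives~\eqref{eq:probcyl1}. One should note that the conditional description of $x$ given $\g$ a priori involves countably many clusters (when $V$ is infinite), but only the finitely many clusters meeting $F$ affect the event $\cyl xF$, so no convergence issue arises; and when $\g$ has an infinite cluster, that cluster still carries a single well-defined uniform sign in the coupling, so the formula is unaffected.

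The only mild subtlety — and the step I would be most careful about — is the bookkeeping of the compatibility indicator $B_F(x,\g)$: one must check that ``$x_F$ is constant on each $C\cap F$'' is precisely the statement that no path in $\g$ joins $i,j\in F$ with $x_i\neq x_j$, which holds because two vertices of $F$ lie in the same cluster iff they are joined by a path in $\g$. Once this identification is in place, the proof is a one-line conditioning argument. The result is the natural finite-$F$ generalisation of the standard Edwards--Sokal cylinder formula, and it is exactly what is needed downstream to rewrite $\nu(\cyl xn)$ and hence $h_n(x)$ in random-cluster terms.
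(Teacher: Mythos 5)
Your proof is correct and takes essentially the same approach as the paper's: condition on $\g$, use the explicit cluster-sign description of the FK--Ising coupling, and observe that the conditional probability of the cylinder is $B_F(x,\g)\,2^{-\w_F(\g)}$. The paper states this in a single sentence; you spell out the same calculation with some extra (and harmless) care about bookkeeping.
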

\begin{proof}
  Conditioned on $\g$ and the event that $\g$ is compatible with $x$ at $F$, the
  probability that the cluster-wise assignment of spins $\{x(C)\}$ gives rise to
  the cylinder $\cyl xF$ equals $2^{-\w_F(\g)}$.
\end{proof}

For general $q\ge1$, we obtain the random cluster model by modulating the
Bernoulli graph model $\eta(\g;p=1-e^{-J_V})$ with
$q^{\w(\g)}=e^{\w(\g)\log q}$, i.e.,
\begin{equation}\label{eq:rcdef}
  \RC_q(\g;p) = q^{\w(\g)} \ltimes \eta(\g;p) = \CG(\w(\g)\cdot\log q + \Log\eta(\g;p)).
\end{equation}
Although the potential $\w(\g)$ is discontinuous, the existence of a unique
element in $\RC_q(\g;p)$ is well established. Note also that
$\eta(\g;p) = \RC_1(\g;p)$.

From~\cite[Theorem 3.21, p.~43]{grimmett}, we obtain the following stochastic domination relations
\begin{align} \label{eq:rcstochdom}
    \RC_q(\g; p) &\prec \RC_{q'}(\g; p') \quad\text{when $p\le p'$ and $q\ge q'$,} \\
\label{eq:rcstochdom2}
    \RC_q(\g; p) &\prec \RC_{q'}(\g; p') \quad\text{when $\frac{p}{q(1-p)} \le \frac{p'}{q'(1-p')}$.}
\end{align}
It follows that we have
\begin{equation}\label{eq:twodom} 
  \eta(\g; \check{p}) \prec \RC_2(\g;p) \prec \eta(\g;p).
\end{equation}
The first domination relation in \eqref{eq:twodom} follows from \eqref{eq:rcstochdom2}, since
$\check{p} = p/(2-p)$ satisfies 
\[
\frac {\check p}{1\cdot(1-\check{p})} = \frac{p}{2\cdot(1-p)}. 
\]
The second domination relation in \eqref{eq:twodom} follows directly from \eqref{eq:rcstochdom}.

It follows from~\eqref{eq:rcstochdom} that there exists a critical
$\beta_c=\beta_c(J_V)\ge0$ such that for $\beta<\beta_c$ the probability of percolation
$\P(\g\in\percol)=0$ for the random graph $\RC_2(\g;p=1-e^{-\beta J_V})$ and
$\P(\g\in\percol)=1$ if $\beta>\beta_c$. This is the same critical $\beta$ for uniqueness of
the corresponding Ising model.

\subsection{Proof of Theorem~\ref{thm:main}}

In what follows, we write $\mu(x,\g)=\FK(x,\g; J_\ZZ)$ for the \emph{two-sided}
FK-Ising model and write $\nu(x,\g_+)=\FK(x,\g_+; J_\NN)$ for the \emph{one-sided}
model. For the marginals, we write $\mu(\bar x)$ and $\nu(x)$ for the corresponding
Ising models and denote the corresponding random cluster models by $\mu(\g)$ and
$\nu(\g_+)$. Our aim is to show the uniform convergence of the sequence $h_n(x)$,
defined by the translation invariant marginal distribution $\mu(x)$ of
$x = \bar x\vert_\NN$, and the one-sided Ising model $\nu(x)$ which is also the unique
eigenmeasure of $\L_\phi$. By Lemma~\ref{lem:RN} this implies the existence of a
continuous eigenfunction $h(x)$.

\subsubsection{The cut}
We consider a bipartition $V=V_-\uplus V_+$ of $V=\ZZ$, where $V_+=\NN$ and
$V_-=\ZZ\setminus\NN$. This cut leads to a unique decomposition of the graph $\g\in\Gamma(\ZZ)$
into three disjoint subgraphs
\begin{equation*}\label{graphdecomp}
  \g = \g_+\uplus\, \e\, \uplus\,\g_-.
\end{equation*}
Here $\g_\pm = \g\cap K(V_\pm)$ are the subgraphs induced on the parts and
$\e=\g\cap K(V_-,V_+)$ is the bipartite graph of edges $ij$ in $\g$ between
vertices $i\in V_-$ and $j\in V_+$. We also write $\ww$ for the union
$\ww:=\g_- \cup \e = \g\setminus \g_+$.

Consider the contracted graph
\[
  \tle^n = \e \bmod{(\Fn+\g\setminus\e)},
\]
where $\Fn+\g\setminus\e$ refers to the equivalence relation where $i\sim j$ if either
$i=j$, $\{i,j\}\subset \Fn$ or there is a path in the graph $\g\setminus\e=\g_+\uplus\g_-$
connecting $i$ and $j$. Then $\tle^n$ is a bipartite graph on vertex set
$\C(\g\setminus\e)=\C_-\uplus\C_+$, $\C_\pm := \C(\g_\pm)$ except that the $\w_n(\g_+)$ components
of $\g_+$ that intersect $\Fn$ join together into the vertex
\[
  \tl C_n =\tl C_n(\g_+) := \cup \{ C\in \C_+ : C\cap \Fn \not=\emptyset\}.
\]

Define for $n\geq 0$ the sequence
\begin{equation}\label{eq:Rdef}
    R_n(\g) = \cork \tle^n
\end{equation}
where $\tle^n$ is the contraction of $\e$ introduced above and the corank equals
the maximum number of edges that are removable without disconnecting clusters.
Since contraction increases the corank it is clear that the sequence $R_n(\g)$
increases. Let ${(a)}_+=\max\{a,0\}$. We can express the limit $R = \lim R_n$ as
\begin{equation}\label{eq:Reqsum}
  R(\g) = R(\ww) = \sum_{C\in \C_-} {\left(\deg(C,\e) - 1\right)}_+,
\end{equation}
since the limit graph of $\tle^n$ is a tree with root $\tl C_\infty = \NN$ and height
one. An edge $ij$ is then removable in the limit graph precisely when
$\deg(C_{\g_-}(i),\e)\ge2$ for the unique cluster $C_{\g_-}(i)\in\C_-$ that contains
$i\in V_-$. Let $\Firr=\Firr(\ww)$ denote the set of endpoints of paths of
$\ww=\g_- \cup \e$ connecting vertices in $V_+$. We have
\begin{equation}\label{eq:Rdelta}
  R_n(\g) = R(\ww),
\end{equation}
precisely when $\Firr \subset \Fn$.

Write $\eta(\e)=\eta(\e;p=1-e^{-J})$ and let $\check{\eta}(\e)$ be the Bernoulli graph model
\begin{equation}\label{tleta}
  \check{\eta}(\e) = 2^{-|\e|} \ltimes \eta(\e) = \eta(\e; \check{p}) \quad\text{where $\check{p}=p/(2-p)$}
\end{equation}
Let $\nu(\g_-)= \nu \circ \psi^{-1}=\RC(\g_-; p=1-e^{-\beta J_{V'}})$, $V'=\ZZ_-\setminus\NN$, refer to the
one-sided random cluster model $\nu(\g_+)$ under the mirror involution
$\psi: \ZZ\to\ZZ$, given by $j\mapsto-(j+1)$. Let also $\xi(\ww)$ denote the product
distribution
\[
  \xi(\ww) = \nu(\g_-) \otimes \check{\eta}(\e).
\]
\begin{lemma}\label{lem:cosh}
 Provided the condition~\eqref{eq:coshx} in Theorem~\ref{thm:main} holds 
 for the one-sided Ising model $\nu(x)$,  we have
  \begin{equation*}\label{eq:qqn}
	\int 2^{R(\ww)} \d\xi(\ww) < \infty.
  \end{equation*}
   In particular, we have $|\Firr|<\infty$, $\xi(\g)$-almost
  surely.
\end{lemma}
\begin{proof}[Proof of Lemma~\ref{lem:cosh}]
  Let $p(ij)=1-e^{-J(ij)}$ and $\check{p}=p/(2-p)$. For the Bernoulli distribution
  $\opn{Be}(\check{p}(ij))$ of $\e_{ij}\in\{0,1\}$, we have the following dominance
  relations
  \[
   \opn{Be}(\check{p}(ij)) \prec \opn{Be}(p(ij)) \prec \opn{Po}(J(ij))
  \]
  where $\opn{Po}(\lambda)$ refer to the Poisson distribution. For fixed $C\subset V_-$, it
  follows that the product distribution $\check{\eta}(\deg(C,\e)) \prec X(C)$, where
  $X(C) \sim \opn{Po(\lambda)}$, with $\lambda=r(\psi(C))$, where $\psi:\ZZ\to\ZZ$ is the mirror map
  $i\to-i-1$ mapping $V_-$ to $V_+=\NN$. Furthermore, if $X\sim\opn{Po}(\lambda)$ then
  \[
	\Ex{ 2^{{(X-1)}_+} } = e^{-\lambda} + \frac 12 \sum_{k=1}^\infty 2^k e^{-\lambda}\frac{\lambda^k}{k!} = \cosh(\lambda).
  \]
  Hence, if we condition of $\g_-$, we obtain that
  \[
	\Ex{2^{R(\ww)}\mid \g_- } \le \prod_{C\in\C_-} \cosh(r\circ\psi(C))
  \]
  and thus, with $\g = \psi\ii2 \g_-$,
  \begin{equation}\label{eq:bnd1}
  \int 2^{R(\ww)} \d\check{\eta}(\e) \d\nu(\g_-) \le \int \prod_{C\in\C(\g)} \cosh(r(C_n)) \d\nu(\g) < \infty
  \end{equation}
  with the finiteness due to the assumption~\eqref{eq:coshx} and the
  equality~\eqref{eq:coshxcosh}. Since $|\Firr|\le R(\ww)$ and we have shown that
  $2^R\in L^1(\xi)$, it follows that $\Firr$ is $\xi$-almost surely finite.
\end{proof}

\subsubsection{The factorised representation} We can use Lemma~\ref{lem:cosh} to
derive the following lemma that describes the factorisation of the two-sided
random cluster distributions implied by the graph
decomposition~\eqref{graphdecomp}.
\begin{lemma}\label{lem:factorisation}
  Under the same conditions as in Lemma~\ref{lem:cosh}, we have the following expression
  for the two-sided random cluster model
  \begin{equation}\label{eq:represent2}
	\mu(\g) = \frac1{K_0} \cdot 2^{R_0(\g)} \cdot \left(\nu(\g_+) \otimes \xi(\ww)\right),
  \end{equation}
  where $K_0=\int 2^{R_0(\g)} \d\nu(\g_+)\d\xi(\ww) < \infty$.
\end{lemma}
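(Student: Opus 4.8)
The plan is to compute the potential (in the sense of "Log" of the measure, i.e.\ as a limit of log-cylinder-probabilities) of the two-sided random cluster model $\mu(\g) = \RC_2(\g; p=1-e^{-\beta J_\ZZ})$ and to decompose it along the cut $\ZZ = V_-\uplus V_+$ into contributions attached to $\g_+$, $\g_-$, and the crossing edges $\e$. Writing $\mu(\g)=\CG(\w(\g)\log 2 + \Log\eta(\g; p))$, the Bernoulli part $\Log\eta(\g;p)$ splits additively as $\Log\eta(\g_+;p) + \Log\eta(\g_-;p) + \Log\eta(\e;p)$ since the edge sets $E(K(V_+))$, $E(K(V_-))$ and $E(K(V_-,V_+))$ partition $E(K(\ZZ))$ and the edges are independent under $\eta$. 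The work is therefore to understand the cluster-count potential $\w(\g)$ in terms of the three pieces.

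The key combinatorial identity is that, up to an additive constant that does not depend on $\g$ in the relevant differences, $\w(\g) = \w(\g_+) + \w(\g_-) - (\text{number of "mergers" caused by }\e)$, and that the number of mergers equals $\bigl(\sum_{C\in\C_+ : C\cap\Fn\neq\emptyset} 1 + \sum_{C\in\C_-}\deg(C,\e)\bigr)$ minus the number of surviving clusters in the contracted bipartite graph $\tle^n$ minus the corank $R_n(\g) = \cork\tle^n$. Taking $n\to\infty$ and using \eqref{eq:Reqsum}--\eqref{eq:Rdelta}, which already identify $R_n(\g)=R(\ww)$ for $n$ large enough (once $\Firr\subset\Fn$), together with the fact that $|\Firr|<\infty$ $\xi$-almost surely by Lemma~\ref{lem:cosh}, the merger count stabilises and contributes a factor $2^{R_0(\g)}$ (the index $0$ being legitimate because $R_n$ is increasing and one can absorb the difference $R_n - R_0$ into the $\eta$-structure, or equivalently because the statement is about the full potential limit). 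Concretely I would: (i) fix $\Lambda_n = [-n,n]$ and write the wired cylinder probability $\mu(\cyl\g{\Lambda_n})$ using Lemma~\ref{lem:probcyl}-style bookkeeping for the random cluster model on the induced graph; (ii) substitute the decomposition $\g[\Lambda_n] = \g_+[\Fn]\uplus\e[\Lambda_n]\uplus\g_-[\psi(\Fn)]$ and collect the Bernoulli weights into $\eta(\g_+;p)$, $\nu(\g_+)$'s defining weight $2^{\w(\g_+)}\eta(\g_+;p)$ wait—more precisely collect $2^{\w(\g_+)}\eta(\g_+;p) \propto \nu(\g_+)$ on the $V_+$-side and $2^{\w(\g_-)}\eta(\g_-;p)\propto\nu(\g_-)$ on the $V_-$-side; (iii) show the leftover factor from $\e$ together with the cluster-merging correction is exactly $2^{-|\e|}\eta(\e;p)\cdot 2^{R_n(\g)} = \check\eta(\e)\cdot 2^{R_n(\g)}$ up to the normalising constant; (iv) pass to the limit, using $R_n\to R$ and the $\xi$-integrability of $2^R$ to guarantee $K_0 = \int 2^{R_0(\g)}\d\nu(\g_+)\d\xi(\ww)<\infty$, noting $K_0$ is finite because $R_0\le R$ and $2^R\in L^1(\xi)$ while $\nu(\g_+)$ is a probability measure.

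The main obstacle is step (iii): matching the cluster-count bookkeeping exactly, i.e.\ proving that the discrepancy between $\w(\g)$ and $\w(\g_+)+\w(\g_-)-\w^w(\text{relevant contraction})$ is precisely $R_n(\g)=\cork\tle^n$, and that the wired-versus-free boundary subtleties in defining $\w$ for infinite graphs do not introduce spurious $\g$-dependent terms. This is where one must be careful that $\tl C_n$ (the merged root cluster of $\g_+$ meeting $\Fn$) and the tree-of-height-one structure of the limit of $\tle^n$ are used correctly; the identity \eqref{eq:Reqsum} and the stabilisation \eqref{eq:Rdelta} are designed exactly for this, so the argument reduces to a finite-graph Euler-characteristic count $|E|-|V|+\w = \cork$ applied to $\tle^n$ and then a limit. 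Once the finite-$n$ identity is in hand, taking $n\to\infty$ is routine given Lemma~\ref{lem:cosh}.
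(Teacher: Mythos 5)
Your overall strategy is the right one: factorise the Bernoulli part $\eta(\g)=\eta(\g_+)\otimes\eta(\e)\otimes\eta(\g_-)$ along the cut, decompose the cluster-count potential $\w(\g)$, and then regroup into $\nu(\g_+)\otimes\xi(\ww)$ tilted by a power of $2$. But the key combinatorial identity — the step you yourself single out as ``the main obstacle'' — is not actually established in your proposal, and the bookkeeping formula you write down for the ``number of mergers'' does not check out. Testing it at $n=0$: you claim mergers $= \w_n(\g_+) + |\e| - \w(\tle^n) - R_n$. At $n=0$ one has $\w_0(\g_+)=0$ (no clusters meet the empty interval) and $\w(\tle^0)=\w(\g)$, so your formula gives mergers $= |\e|-\w(\g)-R_0$, whereas the correct count is $\w(\g_+)+\w(\g_-)-\w(\g)=|\e|-R_0(\g)$. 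These differ by $\w(\g)$, so the reduction to $2^{R_0(\g)}$ does not follow from what you wrote. The clean identity one actually wants is
\begin{equation*}
  \w(\g) \;=\; \w(\g_+)+\w(\g_-)-|\e|+R_0(\g),
\end{equation*}
which is an instance of the contraction formula $\w(G)=\w(G\setminus H)-\opn{rank}(H\bmod{G\setminus H})$ for $H\subset G$ applied with $H=\e$, combined with $\opn{rank}\tle^0=|\e|-\cork\tle^0=|\e|-R_0(\g)$; it is proved in the paper for finite graphs and carries over to potentials.

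The second source of trouble in your sketch is that you insist on letting $n\to\infty$ and invoking $R_n\to R$, wired-vs-free boundary issues, and then ``absorbing $R_n-R_0$ into the $\eta$-structure.'' None of this is needed, and it obscures why the statement has $R_0$ in it rather than $R$: the lemma is a potential identity, and the contraction argument already yields $R_0$ directly, with no limit in $n$ and no wiring. The only place $R$ (and Lemma~\ref{lem:cosh}) enters is at the very end, to certify $K_0<\infty$ via $2^{R_0}\le 2^R\in L^1(\nu(\g_+)\otimes\xi(\ww))$. Your step (iv) gets this last point right; it is steps (ii)--(iii), the exact form of the cluster-count decomposition, where the gap lies. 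If you replace your merger-count formula with the displayed identity above and drop the $n\to\infty$ limit entirely, the rest of your regrouping (moving $2^{\w(\g_+)}$ onto $\eta(\g_+)$ to produce $\nu(\g_+)$, $2^{\w(\g_-)}$ onto $\eta(\g_-)$ to produce $\nu(\g_-)$, and $2^{-|\e|}$ onto $\eta(\e)$ to produce $\check\eta(\e)$, leaving the tilt $2^{R_0(\g)}$) goes through exactly as you intended.
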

\begin{proof}
  If we consider the decomposition in~\eqref{graphdecomp}, it is clear that the
  Bernoulli distribution $\eta(\g)=\eta(\g; p)$ factorises into three disjoint Bernoulli graphs
  \begin{equation}\label{etafactor}
	\eta(\g) = \eta(\g_+) \otimes \eta(\e) \otimes \eta(\g_-).
  \end{equation}
  For finite graphs $H\subset G$, we have $\w(G)=\w(G\setminus H)-\opn{rank}(H\bmod{G\setminus H})$
  and, since this equality holds for potentials, we have
  \begin{equation}\label{eq:mat1}
	\w(\g) = \w(\g_+) + \w(\g_-) - |\e| + R_0(\g).
  \end{equation}
  By definition $\mu(\g) = 2^{\w(\g)} \ltimes \eta(\g)$ where the equality, as before, 
  states that $\mu(\g)$ is unique.

  Thus, \eqref{eq:modassoc} gives that
  \begin{align*}
	\mu(\g) & = 
            2^{\w(\g_+) + \w(\g_-) - |\e| + R_0(\g)} \ltimes
			\left(\eta(\g_+) \otimes \eta(\e) \otimes \eta(\g_-)\right)                                       \\
		  & = 2^{R_0} \ltimes
			\left(
            2^{\w(\g_+) + \w(\g_-) - |\e|} \ltimes
			\left(\eta(\g_+) \otimes \eta(\e) \otimes \eta(\g_-)\right)     \right)
   \end{align*}
  and \eqref{eq:moddistrib} gives that
  \begin{align*}
       \mu(\g)    & = 2^{R_0} \ltimes
			\left(
			(2^{\w(\g_+)} \ltimes \eta(\g_+))  \otimes (2^{-|\e|} \ltimes \eta(\e)) \otimes (2^{\w(\g_-)} \ltimes \eta(\g_-))
			\right)                                                                                           \\
		  & = 2^{R_0(\g)} \ltimes \left(\nu(\g_+) \otimes \xi(\ww)\right).
  \end{align*}
  The equality~\eqref{eq:represent2} follows from~\eqref{ccdef} and
  Lemma~\ref{lem:cosh}, since $2^{R_0}\le 2^R$ is in
  $L^1(\nu(\g_+)\otimes \xi(\ww))$.
\end{proof}

\subsubsection{The conclusion in the proof of Theorem~\ref{thm:main}}

Let, as in Lemma~\ref{lem:probcyl}, $B_n(x,\g) = B_\Fn(x,\g)$ indicate that the
spin $x$ and graph $\g$ are compatible at $F=\Fn$. Note that we can write the 
indicator $B_n(x,\g)$ as 
\begin{equation}\label{eq:Bnfac}
  B_n(x,\g) = A_n(x,\g) \cdot B_n(x,\g_+)
\end{equation}
where
\[
  A_n(x,\g) =
  \begin{cases}
    B_{\tl C_n}(x,\ww) & B_n(x,\g_+) = 1  \\
    1                & B_n(x,\g_+) = 0.
  \end{cases}
\]
Thus $A_n$ indicates that the compatibility is not killed by a path in
$\ww=\g_-\cup\e$. It also holds that $B_n(x,\g_+)=1$ and $\Firr\subset\Fn$
together implies that
\begin{equation}\label{eq:Bdelta}
  A_n(x,\g) = A(x,\ww) := B_{\Firr}(x,\ww),
\end{equation}
since any path in $\ww$ implying $A_n(x,\g)=0$ must have endpoints
$\{i,j\}\subset \Firr$ with $x_i\not= x_j$.

Let $\alpha_n(\g_+)\in\CM(\Gamma(\NN))$ be the distribution
\[
  \alpha_n(\g_+) = \frac{2^{-\w_n(\g_+)} \cdot B_n(x,\g_+) \cdot \nu(\g_+)} {\int 2^{-\w_n(\g_+)} \cdot B_n(x,\g_+) \cdot \d\nu(\g_+)}.
\]
From Lemma~\ref{lem:probcyl}, we deduce that
\begin{align*}
  h_n(x) &=
           \frac
           {\int 2^{-\w_n(\g)} B_n(x,\g) \d\mu(\g)}
           {\int 2^{-\w_n(\g_+)} B_n(x,\g_+) \d\nu(\g_+)}
  \\[4pt]
         &=
           \frac
           {\frac1{K_0}\int 2^{\w_n(\g_+)-\w_n(\g)} \cdot 2^{R_0(\g)}\cdot B_n(x,\g)\cdot  2^{-\w_n(\g_+)}\d\nu(\g_+)\d\xi(\ww)}
           {\int 2^{-\ w_n(\g_+)} B_n(x,\g_+) \d\nu(\g_+)}
           \qquad
           \text{by~\eqref{eq:represent2}}
  \\[4pt]
         & = \frac 1{K_0}\,\int 2^{\w_n(\g_+)-\w_n(\g)}
           \cdot 2^{R_0(\g)} A_n(x,\g)\d\alpha_n(\g_+)\d\xi(\ww)
           \qquad\text{by~\eqref{eq:Bnfac}}.
\end{align*}

Note that
\begin{equation}\label{eq:wR}
  \w_n(\g_+) - \w_n(\g) = R_n(\g) - R_0(\g)
\end{equation}
since both sides equal the rank of the subgraph of $\tle^0$ consisting of edges
$ij\in H$ with one endpoint in $\tl C_n$. From~\eqref{eq:wR}, we deduce that
\begin{equation}\label{eq:hnint}
	h_n(x) = \frac 1{K_0} \cdot \int A_n(x,\g) \cdot 2^{R_n(\g)} \cdot \d\alpha_n(\g_+)\d\xi(\ww).
\end{equation}

Let $N=N(\ww)$ be the minimum $n$ such that $\Firr\subset\Fn$ and note that
Lemma~\ref{lem:cosh} implies that $\xi(N\ge n)\to 0$ as $n\to\infty$. On account
of~\eqref{eq:Rdelta} and~\eqref{eq:Bdelta}, it follows that the integrand
\(A_n(x,\ww) \cdot 2^{R_n(\g)}\) in~\eqref{eq:hnint} equals $A(x,\ww)\cdot 2^{R(\ww)}$
on the event $N<n$, since $B_n(x,\g_+)=1$, $\alpha_n$-almost surely. Let
\[
  h(x) := \frac 1{K_0}\int A(x,\ww) 2^{R(\ww)} \d\alpha_n(\g_+)\d\xi(\ww)
  = \frac1{K_0}\,\int A(x,\ww) 2^{R(\ww)} \d\xi(\ww).
\]
Thus
\begin{align*}
  |h_n(x) - h(x)|
  & \le \frac 1{K_0} \,
	\int \left|A_n(x,\ww)\cdot2^{R_n(\g)}-A(x,\ww)\cdot 2^{R(\g)}\right|\d\alpha_n(\g_+)\d\xi(\ww)   \\
  & = \frac 1{K_0}\,
	\int_{N>n} \left|A_n(x,\ww)\cdot2^{R_n(\g)}-A(x,\ww)\cdot2^{R(\g)}\right|\d\alpha_n(\g_+)\d\xi(\ww).
\end{align*}
Since
\[ |A_n(x,\ww) \cdot 2^{R_n(\g)} - A(x,\ww)\cdot 2^{R(\g)}|\le 2^{R(\ww)}\in L^1(\xi), \]
we conclude that
\begin{align*}
  |h_n(x) - h(x)| & \le  \frac1{K_0} \int_{N>n} 2^{R(\ww)} \d\xi(\ww)
				  \le \frac1{K_0}\cdot \xi(N>n) \cdot \int 2^{R}\d\xi,
\end{align*}
where, on account of Lemma~\ref{lem:cosh}, the right hand side tends to zero
with a rate independent of $x$. {\qed}

\subsection{Proof that Theorem~\ref{thm:main} implies Theorem~\ref{main}}

Assume $\g=\g_+\in\Gamma(\NN)$ with distribution $\nu(\g)= \RC(\g;p=1-e^{-\beta J_\NN})$.
Recall that $J(ij)=\J(|i-j|)$ satisfies the square summability condition
\(\sum r_n^2 < \infty\) with $r_n=\sum_{k=n+1} \J(k)$. Order the elements of $\C=\C(\g)$ as
$\C=\{C_0,C_1,\dots\}$ so that \(0 = \iota_0 < \iota_1 < \dots\) where
\[
  \iota_n =\min(i\in C_n)= \inf \{ i : i \not\in C_1\cup \dots \cup C_{n-1}\}.
\]

We first show that if the cluster size distribution of $C_0$ has
\emph{exponentially decreasing tails}, i.e.\ if, for some $K>0$ and some $c>0$,
we have
\begin{equation}\label{eq:geometric}
  \P(|C_0| > n)  \le Ke^{-cn},
\end{equation}
then this implies the condition~\eqref{eq:coshx} or equivalently,
by~\eqref{eq:coshxcosh}, that
\begin{equation}\label{eq:cosh}
  \Ex{\prod_{C\in\C(\g)} \cosh(r(C))}<\infty.
\end{equation}
Thus~\eqref{eq:geometric} implies the conditions of Theorem~\ref{thm:main} hold
and thus the existence of a continuous eigenfunction.

If we condition the random cluster model on the clusters $\{C_0,\dots,C_{k-1}\}$
that partition $\halfopen{0}{\iota_k}$ then the distribution of the remaining graph
$\g\left[\overline{\left(\cup_{j=1}^{k-1} C_j\right)}\right]$ is the random cluster
model with edge probabilities $p'(ij) = p(ij)\ett_{i,j\not\in \cup \C_n}$. (See
e.g.~\cite{berghaggkahn}.) It follows that the conditional distribution of $C_k$,
given $C_1,\dots, C_{k-1}$, is stochastically dominated by the distribution of $C_0$ shifted $\iota_k$ steps to the right. In particular, it follows that the
conditional distribution of $|C_k|$ has exponentially decreasing tails. Thus,
for some $K>0$ and $c>0$ as in~\eqref{eq:geometric}
\begin{align}
  \Ex{|C_k|^n\mid C_1,\dots,C_{k-1}}
  &\le \int_0^\infty \P(|C_0|^n \ge x) \d x \nonumber\\
  & \le K \cdot \int_0^\infty e^{-c x^{1/n}} \d x
  = K \cdot \frac {n!}{c^n}.\label{eq:moment}
\end{align}

Since $r_n$ is a decreasing sequence, we have ${r(C_k)} \le r_{\iota_k}\cdot|C_k|$ and the
Cauchy-Schwarz inequality implies that ${r(C_k)}^2\le R\cdot |C_k|$ where
$R=\sum_{i=0}^\infty r_i^2$. Thus,
\begin{align*}
  {\cosh(r(C_k))} &=1 + \sum_{n=1}^\infty \frac{{r(C_k)}^{2n}}{(2n)!}
  \le 1 + {r^2_{\iota_k}\cdot|C_k|^2} \sum_{n=1}^\infty \frac{{r(C_k)}^{2n-2}}{(2n)!} \\
  &\le 1 + {r^2_{\iota_n}} \cdot \sum_{n=1}^\infty \frac{R^{n-1} |C_k|^{n+1}}{(2n)!}.
\end{align*}
Taking the conditional expectation, using~\eqref{eq:moment}, gives
\begin{align*}
  \Ex{\cosh(r(C_k))|C_1,\dots,C_{k-1}}
  &\le 1 + {r^2_{\iota_k}} \cdot \sum_{n=1}^\infty \frac{K R^{n-1} \cdot (n+1)! \cdot c^{-{(n+1)}}}{(2n)!} \\
  &\le 1 + {r^2_{\iota_k}} M
\end{align*}
where a term-wise comparison gives $M<\infty$. (E.g.\ using that
$2^n\cdot{(n!)}^2\le{(2n)!}$.)

We obtain
\begin{align*}
  \Ex{ \prod_{n=0}^\infty \cosh(r(C_n)) }
  &= \Ex{  \prod_{n=0}^\infty \Ex{\cosh(r(C_n))\mid C_1,\dots, C_{n-1} }} \\
  &\le e^{ M \cdot \sum_{n=1}^\infty r^2_{\iota_n}} < e^{M R} < \infty,
\end{align*}
and thus we have shown that $\eqref{eq:geometric}\implies\eqref{eq:cosh}$.

Finally, we need to show that the condition $\beta<\beta_c(J_\ZZ)$ of Theorem~\ref{main}
implies~\eqref{eq:geometric}. 

Since the weighting $J_\ZZ$ is vertex-transitive, the result in Hutchcroft~\cite{Hutch} says that if $\beta<\beta_c(J_\ZZ)$, then for
the two-sided model $\mu(\g)$, the distribution $\mu(|C_\g(\orig)|)$ of the size of the cluster containing any $\orig\in\ZZ$ has exponentially decreasing tails. Since
$\nu(\g_+)\prec \mu(\g[\NN])$ and $C_0=C_{\g_+}(0) \subset C_{\g}(0) \cap \NN$, the
condition~\eqref{eq:geometric} readily follows for the one-sided model $\nu$. {\qed}

\noindent
\textbf{Acknowledgements}. We would like to thank Noam Berger, Evgeny
Verbitskiy, and Aernout van Enter for valuable comments. The second author wishes to thank the Knut and
Alice Wallenberg Foundation for financial support. The third author acknowledges
the ERC Grant 833802--Resonances.

\end{document}